\newtheorem{theorem}{Theorem}
\newtheorem{lemma}[theorem]{Lemma}
\theoremstyle{definition}
\newtheorem{example}[theorem]{Example}
\newtheorem{question}[theorem]{Question}
\theoremstyle{remark}
\newtheorem{remark}[theorem]{Remark}
\numberwithin{equation}{section}
\newcommand{\bP}{\operatorname{\mathbb{P}}}
\newcommand{\Ker}{\operatorname{Ker}}
\begin{document}

\title{Linear families of smooth hypersurfaces over finitely generated fields}

\author{Shamil Asgarli}
\address{Department of Mathematics and Computer Science \\ Santa Clara University \\ 500 El Camino Real \\ USA 95053}
\email{sasgarli@scu.edu}

\author{Dragos Ghioca}
\address{Department of Mathematics, University of British Columbia, Vancouver, BC V6T 1Z2}
\email{dghioca@math.ubc.ca}

\author{Zinovy Reichstein}
\address{Department of Mathematics, University of British Columbia, Vancouver, BC V6T 1Z2}
\email{reichst@math.ubc.ca}

\subjclass[2020]{Primary 14N05; Secondary 14J70, 14G15}
\keywords{linear system, hypersurface, finite fields, smoothness}

\begin{abstract}
Let $K$ be a finitely generated field. We construct an $n$-dimensional linear system $\mathcal{L}$ of hypersurfaces of degree $d$ in $\bP^n$ defined over $K$ such that each member of $\mathcal{L}$ defined over $K$ is smooth, under the hypothesis that the characteristic $p$ does not divide $\gcd(d, n+1)$ (in particular, there is no restriction when $K$ has characteristic $0$). Moreover, we exhibit a counterexample when $p$ divides $\gcd(d, n+1)$. 
\end{abstract}

\maketitle

\section{Introduction}\label{sect:intro}

The study of hypersurfaces varying in a pencil, or more generally, in a linear system of arbitrary dimension, is an active research area. For instance, determining the number of reducible members in a pencil is already a challenging problem \cite{Ste89}, \cite{Vis93}, \cite{PY08}. When the base field is a number field, the study of pencils has deep connections to Diophantine geometry; see, for example \cite{DGH21}. 
Linear systems of hypersurfaces over finite fields have been studied by Ballico~\cite{Bal07}, \cite{Bal09}. 

Our primary goal in the present paper is to address the following question from
a recent paper~\cite{AG22} by the first two authors. While the version stated in~\cite{AG22} 
was concerned with linear systems of hypersurfaces over finite fields, in this paper we will work over an arbitrary \emph{finitely generated field}. Recall that a field $K$ is called finitely generated if it is generated 
by a finite number of elements as a field (or equivalently, as a field extension of its prime subfield).

\begin{question}\label{quest:main}
Let $K$ be a finitely generated field and $r \geqslant 1$, $n\geqslant 2$, $d\geqslant 2$ be integers. Do there exist $r+1$ linearly independent homogeneous polynomials $F_0, F_1, ..., F_r \in K[x_0, \ldots, x_n]$ of degree $d$ such that the hypersurface
$$
X_{[a_0:a_1:\ldots:a_r]} = \{a_0 F_0 + a_1 F_1 + ... + a_r F_r = 0\} \subset \mathbb{P}^n
$$
is smooth for every $[a_0:a_1:\ldots:a_r]\in\mathbb{P}^r(K)$?
\end{question}

Here, as usual, ``smooth" means ``smooth at every $\overline{K}$-point", not just at every $K$-point.
Question~\ref{quest:main} can be rephrased in geometric terms as follows. Consider the linear system $\mathcal{L}=\langle F_0, ..., F_r\rangle$ of (projective) dimension $r$ spanned by $F_0, \ldots, F_r$. We say that $\mathcal{L}$ is $K$-\emph{smooth} if for every $[a_0:a_1:\ldots:a_r]\in \mathbb{P}^r(K)$, the hypersurface cut out by $a_0 F_0 + a_1 F_1 + ... + a_r F_r$ is smooth in $\mathbb{P}^n$. In other words, Question~\ref{quest:main} asks for existence of a $K$-smooth linear system $\mathcal{L}$ in $\mathbb P^n$ of prescribed degree and dimension.

We show that, under a mild assumption on the characteristic, the maximum value of $r$ for which Question~\ref{quest:main} has a positive answer is $r=n$.

\medskip 

\begin{theorem}~\label{thm:main}
Let $K$ be an arbitrary field. 
\begin{enumerate}
    \item \label{main-thm-item-1} If $r\geqslant n+1$, then there does not exist a $K$-smooth linear system of (projective) dimension $r$ (of any degree $d \geqslant 2$).

    \item \label{main-thm-item-2} Suppose $K$ is a finitely generated field of characteristic $p \geqslant 0$. If $r \leqslant n$ and $p\nmid \gcd(d, n+1)$, then there exist  homogeneous polynomials $F_0, \ldots, F_r$ in $x_0, \ldots, x_n$ of degree $d$ such that $\mathcal{L} = \langle F_0, \ldots, F_r \rangle$ is a $K$-smooth linear system of (projective) dimension $r$. 

\end{enumerate}
\end{theorem}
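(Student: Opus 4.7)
For Part (1), fix any $K$-rational point $x \in \bP^n(K)$ (e.g.\ $x = [1:0:\cdots:0]$), write $F_a := \sum_{i=0}^{r} a_i F_i$, and consider the $K$-linear map
\[\Phi_x : K^{r+1} \longrightarrow K^{n+2}, \qquad (a_0, \ldots, a_r) \longmapsto \bigl(F_a(x),\, (\partial_0 F_a)(x),\, \ldots,\, (\partial_n F_a)(x)\bigr).\]
The hypersurface $X_{[a]}$ is singular at $x$ precisely when $a \in \Ker \Phi_x$. Euler's identity $d \cdot F_a(x) = \sum_{j=0}^n x_j (\partial_j F_a)(x)$ says that the image of $\Phi_x$ lies in the hyperplane $\{d y_0 - \sum_j x_j y_{j+1} = 0\}$ of $K^{n+2}$, which is proper since its coefficient vector $(d, -x_0, \ldots, -x_n)$ is non-zero (either $d \neq 0$ in $K$ or some $x_j \neq 0$). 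Hence $\mathrm{rank}(\Phi_x) \leq n+1$ and $\dim_K \Ker \Phi_x \geq (r+1) - (n+1) = r - n \geq 1$ when $r \geq n+1$; any non-zero kernel element gives $[a] \in \bP^r(K)$ with $X_{[a]}$ singular at $x$.

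For Part (2), I first reduce to $r = n$: any sub-system of a $K$-smooth linear system is $K$-smooth, so it suffices to produce a $K$-smooth linear system of projective dimension exactly $n$. For the construction, I would use a separable extension $L/K$ of degree $n+1$, which exists for every finitely generated field $K$ and every $n \geq 1$. Fixing a primitive element $\beta$ with $L = K(\beta)$, set
\[w \;:=\; x_0 + \beta x_1 + \beta^2 x_2 + \cdots + \beta^n x_n \;\in\; L[x_0, \ldots, x_n]\]
and define
\[F_i \;:=\; \mathrm{Tr}_{L/K}\bigl(\beta^i \cdot w^d\bigr) \;\in\; K[x_0, \ldots, x_n], \qquad i = 0, 1, \ldots, n,\]
each homogeneous of degree $d$. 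When $p \nmid d$, they are $K$-linearly independent: the coefficient of $x_0^{d-1} x_k$ in $w^d$ is $d\beta^k$ for $k \geq 1$ (and the coefficient of $x_0^d$ is $1$), so these coefficients span $L$ over $K$ as soon as $d$ is invertible in $K$, whence non-degeneracy of the trace pairing makes the $K$-linear map $c \mapsto \mathrm{Tr}_{L/K}(c\, w^d)$ injective.

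$K$-smoothness of $\mathcal{L} = \langle F_0, \ldots, F_n\rangle$ when $p \nmid d$ would follow by diagonalization over $\overline{K}$. Let $\sigma_0, \ldots, \sigma_n : L \hookrightarrow \overline{K}$ be the embeddings, with $\sigma_k(\beta) = \beta_k$. Setting $W_k := \sum_j \beta_k^j x_j$ and $B_k(a) := \sigma_k(\sum_i a_i \beta^i)$, one has $\sum_i a_i F_i = \sum_{k=0}^n B_k(a)\, W_k^d$. Vanishing of every partial derivative at a point $x$ becomes $\sum_k B_k(a)\, W_k(x)^{d-1} \beta_k^j = 0$ for $j = 0, \ldots, n$; invertibility of the Vandermonde matrix $(\beta_k^j)_{j,k}$ forces $B_k(a) W_k(x)^{d-1} = 0$ for each $k$. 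For $[a] \in \bP^n(K)$ with $a \neq 0$, the element $\sum_i a_i \beta^i$ is non-zero in $L$ (since $1, \beta, \ldots, \beta^n$ is a $K$-basis), so every $B_k(a) \neq 0$; hence $W_k(x) = 0$ for all $k$, and a second Vandermonde argument then forces $x = 0$.

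The remaining case $p \mid d$ with $p \nmid n+1$ is the main obstacle, since the identity $d \cdot w^{d-1} = 0$ collapses the construction ($\sum_i a_i F_i$ becomes a $p$-th power over $\overline{K}$, hence non-reduced and everywhere singular). I would replace $w^d$ by a degree-$d$ element of $L[x_0, \ldots, x_n]$ with non-identically-vanishing partial derivatives, for example $w^{d-1}\tilde w$ for a second $L$-linear form $\tilde w$ built from a carefully chosen element of $L$, or a product $\prod_l w_l$ of $d$ Galois-related linear forms. The Jacobian analysis then involves two coupled Vandermonde-type systems, and the hypothesis $p \nmid n + 1$ should enter through the non-vanishing modulo $p$ of an $(n+1)\times(n+1)$ determinant that acquires a factor of $n+1$.
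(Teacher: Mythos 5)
Your part (1) is correct and is essentially the paper's own dimension count: the paper packages it as the truncation map $\Psi$ onto the $(n+1)$-dimensional space $x_0^{d-1}K[x_0,\ldots,x_n]_1$, whose kernel consists exactly of the forms singular at $[1:0:\cdots:0]$; your variant via Euler's identity is a sound rephrasing of the same bound. Your treatment of part (2) in the case $p\nmid d$ is also correct and genuinely more direct than the paper's route: taking $L/K$ separable of degree $n+1$ and $F_i=\mathrm{Tr}_{L/K}(\beta^i w^d)$, every $K$-member diagonalizes over $\overline{K}$ as $\sum_k B_k(a)W_k^d$ with all $B_k(a)\neq 0$, and your two Vandermonde steps are fine; this handles any field admitting such an extension in one stroke, whereas the paper first proves the finite-field case (normal basis plus a Frobenius/descent argument) and then reaches general finitely generated fields by induction on $\dim(K)$ through the discrete-valuation lifting Lemma~\ref{lem:lifting}. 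Two caveats: you assert but do not justify that every finitely generated field has a separable extension of each degree $n+1$ (clear for finite fields; for infinite finitely generated fields it follows from Hilbertianity, but it needs a word), and linear independence of the $F_i$ should be recorded over $\overline{K}$, though for $K$-coefficient forms this is equivalent to independence over $K$, which you do prove.

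The genuine gap is the case $p\mid d$, $p\nmid(n+1)$, which is where the hypothesis of Theorem~\ref{thm:main}\eqref{main-thm-item-2} actually bites: there you offer only a plan (``replace $w^d$ by $w^{d-1}\tilde w$ or a product of Galois-related linear forms''), with no construction pinned down and no smoothness analysis carried out. The paper resolves this case with two concrete ingredients your sketch does not supply: Lemma~\ref{lemma:smoothness-klein}, showing that the Klein-type form $\sum_i c_ix_i^{d-1}x_{i+1}$ (indices mod $n+1$) is smooth whenever all $c_i\neq 0$ --- this is exactly where the factor $n+1$ appears, since at a putative singular point all $n+1$ terms of $F(P)$ become equal --- and the choice $F_i=y_i^{d-1}y_{i+1}$ with cyclically Frobenius-conjugate coordinates $y_i$, which guarantees that every $K$-member has all coefficients nonzero in the new coordinates. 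To transplant this into your trace framework you would need the embeddings $\sigma_0,\ldots,\sigma_n$ to be cyclically permuted by a single automorphism, i.e., a \emph{cyclic} separable extension of degree $n+1$ of an arbitrary finitely generated field; that existence claim is nontrivial and is nowhere addressed in your proposal (the paper avoids it entirely by working over $\mathbb{F}_q$, where such extensions always exist, and then lifting via Lemma~\ref{lem:lifting}). As written, part (2) is therefore proved only when $p\nmid d$, and the remaining case is an unproved sketch rather than a proof.
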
 

Note that the assumption $p\nmid \gcd(d, p+1)$ on the characteristic of $K$ holds automatically when $\operatorname{char}(K)=0$. On the other hand, we will show in Section~\ref{sect:quadrics} that this assumption \emph{cannot} be dropped in general. More precisely, we will show that no $n$-dimensional linear system of degree $2$ hypersurfaces in $\mathbb P^n$ can be $K$-smooth in the case where $K$ is a field of characteristic $2$ and $n \geqslant 1$ is an odd integer; see~Theorem~\ref{thm:quadrics}.

The case where $r=1$, which corresponds to a pencil of hypersurfaces, is of particular interest. For any given $n$, the condition that $p\nmid \gcd(d, n+1)$ is satisfied for all but finitely many characteristics $p$. 
In particular, Theorem~\ref{main-thm-item-2} tells us that for every value of $d \geqslant 1$ 
and every finitely generated field $K$ there exists
\begin{itemize}
    \item 
    a $K$-smooth pencil of degree $d$ in $\mathbb{P}^2$ if $\operatorname{char}(K)\neq 3$.
    \item  
    a $K$-smooth pencil of degree $d$ in $\mathbb{P}^3$ if $\operatorname{char}(K)\neq 2$.
    \item 
    a $K$-smooth pencil of degree $d$ in $\mathbb{P}^4$ if $\operatorname{char}(K)\neq 5$.
     \item 
     a $K$-smooth pencil of degree $d$ in $\mathbb{P}^5$ if $\operatorname{char}(K)\neq 2, 3$.
\end{itemize}

On the other hand, the main result of~\cite{AG22}*{Theorem 1.3} proves the existence of a $K$-smooth pencil $\mathcal{L}$ of degree $d$ hypersurfaces in $\mathbb{P}^n$ defined over the field $K = \mathbb{F}_q$ under a different hypothesis:
\begin{align*}
q > \left(\frac{1+\sqrt{2}}{2}\right)^2 \left((n+1)(d-1)^{n}\right)^2 \left((n+1)(d-1)^{n}-1\right)^2\left((n+1)(d-1)^{n}-2\right)^2.
\end{align*}
In particular, an $\mathbb{F}_q$-smooth pencil of degree $d$ hypersurfaces exists in any characteristic as long as $q$ is sufficiently large. It is reasonable to ask if smooth pencils of every degree exist over every finitely generated field.

\medskip

\textbf{Acknowledgements.} In an earlier version of this paper our main result, Theorem~\ref{thm:main}\eqref{main-thm-item-2}, was only stated for finite fields. We are grateful to Angelo Vistoli for suggesting that it can be extended to finitely generated fields and contributing the inductive argument of Section~\ref{sect:induction}. 

The first author is supported by a postdoctoral research fellowship from the University of British Columbia and the NSERC PDF award. The second and third authors are supported by NSERC Discovery grants.

\section{Proof of Theorem~\ref{thm:main}\eqref{main-thm-item-1}}\label{sect:main-result}

In this section $K$ will denote an arbitrary field. We will denote by $K[x_0, \ldots, x_n]_d$ the space of homogeneous polynomials of degree $d$
in $x_0, \ldots, x_n$ with coefficients in $K$. This is a $K$-vector space of dimension $N=\binom{n+d}{d}$.
Points of the projective space $\mathbb{P}(K[x_0, \ldots, x_n]_d)$ are naturally identified with degree $d$ hypersurfaces in $\mathbb{P}^n$.

We proceed with the proof of part \eqref{main-thm-item-1} of Theorem~\ref{thm:main}. Assume the contrary: there exists a $K$-smooth linear system $\mathcal{L} \subset K[x_0, \ldots, x_n]_d$ of (affine) dimension $\geqslant n + 2$. 

Let $x_0^{d-1} K[x_0, \ldots, x_n]_1$ denote the ($n+1$)-dimensional $K$-vector space of degree $d$ forms
divisible by $x_0^{d-1}$. Any such form can be written as $x_0^{d-1} l(x_0, \ldots, x_n)$, where $l \in K[x_0, \ldots, x_n]_1$. Consider the $K$-linear map
$$
\Psi \colon K[x_0, \ldots, x_n]_d \to x_0^{d-1} K[x_0, \ldots, x_n]_1
$$
which removes from $F \in \mathcal{L}(K)$ all monomials which are not multiples of $x_0^{d-1}$. In other words, for any non-negative integers $i_0, \ldots, i_n$
satisfying $i_0 + \ldots + i_n = d$, 
\[  \Psi(x_0^{i_0} x_1^{i_1} \ldots x_n^{i_n}) =  \begin{cases} \text{$x_0^{i_0} x_1^{i_1} \ldots x_n^{i_n}$, if $i_0 \geqslant d - 1$, and} \\
                                                                 \text{$0$, otherwise}. \end{cases} \]
The kernel, $\Ker(\Psi)$, is precisely the set of polynomials $F \in K[x_0, \ldots, x_n]_d$ with the property that the associated hypersurface in $\mathbb{P}^n$ is singular 
at $P = [1: 0: \ldots : 0]$. Since the codimension of $\Ker(\Psi)$ in $K[x_0, \ldots, x_n]_d$ is at least $\dim(x_0^{d-1} K[x_0, \ldots, x_n]_1) = n + 1$
and $\dim(\mathcal{L}) \geqslant n + 2$, we see that $\mathcal{L}\cap \Ker(\Psi)$ must contain a non-zero $K$-point of $\mathcal{L}$. In other words, 
$\mathcal{L}(K)$ contains a hypersurface which is singular at $P$. This shows that $\mathcal{L}$ cannot be $K$-smooth.
\qed

\section{Proof of Theorem~\ref{thm:main}\eqref{main-thm-item-2} in the case, where $K$ is a finite field} 
\label{sect:finite}

We begin by exhibiting two families of smooth hypersurfaces of degree $d \geqslant 2$ over an arbitrary field $K$ of characteristic $p \geqslant 0$.

\begin{lemma}\label{lemma:smoothness-fermat}
Suppose $p\nmid d$. Set $F=c_0 x_0^{d}+ c_1 x_1^{d} + ... + c_n x_n^{d}$. If $c_0, c_1, \ldots, c_n \neq 0$, then
$F$ cuts out a smooth hypersurface in $\mathbb{P}^n$.
\end{lemma}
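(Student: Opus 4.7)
The plan is to use the Jacobian criterion: a hypersurface $\{F=0\} \subset \mathbb{P}^n$ is smooth if and only if the partial derivatives $\partial F/\partial x_0, \ldots, \partial F/\partial x_n$ have no common zero in $\overline{K}^{n+1} \setminus \{0\}$.

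First I would compute
\[
\frac{\partial F}{\partial x_i} = d\, c_i\, x_i^{d-1} \quad \text{for } i = 0, 1, \ldots, n.
\]
Suppose a point $P = [a_0 : a_1 : \ldots : a_n] \in \mathbb{P}^n(\overline{K})$ is a common zero of all these partials. Then $d\, c_i\, a_i^{d-1} = 0$ for every $i$. Since $p \nmid d$, the scalar $d$ is a unit in $K$, and by hypothesis each $c_i$ is nonzero, so we must have $a_i^{d-1} = 0$, hence $a_i = 0$, for every $i$. This contradicts the fact that $P$ is a projective point, so no such $P$ exists.

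I would then note that once all partial derivatives vanish at $P$, the fact that $P$ actually lies on $\{F=0\}$ is automatic from the Euler identity $d \cdot F = \sum_i x_i \, \partial F/\partial x_i$ (which is again used under $p \nmid d$), so there is nothing extra to check. This shows the hypersurface is smooth at every $\overline{K}$-point, as required. There is no serious obstacle here — the lemma is essentially a one-line consequence of the Jacobian criterion — and the only subtlety to flag is the role of the hypothesis $p \nmid d$, which is needed both to ensure $d \neq 0$ in $K$ and (via Euler) to guarantee that common zeros of the partials lie on the hypersurface.
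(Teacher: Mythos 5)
Your proposal is correct and follows exactly the paper's argument: the Jacobian criterion together with the computation $\partial F/\partial x_i = d\,c_i x_i^{d-1}$ and the hypotheses $p \nmid d$, $c_i \neq 0$ immediately show the partials have no common zero in $\mathbb{P}^n$. The extra remark about the Euler identity is harmless but not needed, since the nonexistence of a common projective zero of the partials already rules out any singular point.
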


\begin{proof} This is clear from the Jacobian criterion: the equations 
\[ \frac{\partial F}{\partial x_i}= d c_i x_i^{d-1} = 0 \; \; \text{($i = 0, 1, \ldots, n$)} \]
have no common solution in $\mathbb{P}^n$.
\end{proof}

\begin{lemma}\label{lemma:smoothness-klein}
Suppose $p\mid d$ but $p\nmid (n+1)$. Set $F= c_0 x_0^{d-1} x_1 + c_1 x_1^{d-1} x_2 + ... + c_n x_n^{d-1} x_0$.
If $c_0, c_1, \ldots, c_n \neq 0$, then $F$ cuts out a smooth hypersurface in $\mathbb{P}^n$.
\end{lemma}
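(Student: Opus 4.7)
The plan is to apply the Jacobian criterion at a hypothetical singular $\overline{K}$-point $P = [x_0 : \cdots : x_n]$ of $\{F = 0\}$ and derive a contradiction. Treating indices cyclically modulo $n+1$, the partial derivatives are
\[
\frac{\partial F}{\partial x_j} = (d-1)\, c_j\, x_j^{d-2}\, x_{j+1} + c_{j-1}\, x_{j-1}^{d-1},
\]
and since $p \mid d$ we have $d-1 \equiv -1 \pmod{p}$, so at $P$ the vanishing of all partials amounts to the cyclic system $c_{j-1} x_{j-1}^{d-1} = c_j x_j^{d-2} x_{j+1}$ for $j = 0, 1, \dots, n$.

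I would first show that every coordinate $x_i$ of $P$ is nonzero. Suppose $x_k = 0$ for some $k$. If $d \geq 3$, the equation indexed by $j = k$ becomes $c_{k-1} x_{k-1}^{d-1} = 0$ (using $x_k^{d-2} = 0$), forcing $x_{k-1} = 0$; iterating around the cycle gives all $x_i = 0$, contradicting $P \in \mathbb{P}^n$. If $d = 2$, then $p = 2$ and the equation indexed by $j = k+1$ reduces to $c_k x_k = c_{k+1} x_{k+2}$, so $x_{k+2} = 0$; iterating the step $i \mapsto i+2$ and using that $n+1$ is odd (by $p \nmid n+1$), every index is eventually reached and again all $x_i = 0$.

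With every $x_i \neq 0$ in hand, the key observation is that multiplying the $j$-th partial equation by $x_j$ yields $c_{j-1} x_{j-1}^{d-1} x_j = c_j x_j^{d-1} x_{j+1}$; writing $T_i \colonequals c_i x_i^{d-1} x_{i+1}$ for the $i$-th monomial of $F$, this says $T_{j-1} = T_j$. Hence all $T_i$ coincide, and therefore $F = \sum_{i=0}^n T_i = (n+1) T_0$. Since $F$ vanishes at $P$ and $p \nmid n+1$, we conclude $T_0 = 0$; but $T_0 = c_0 x_0^{d-1} x_1$ is nonzero (as $c_0 \neq 0$ and $x_0, x_1 \neq 0$), a contradiction.

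The main obstacle is the coordinate-vanishing step, where the cases $d \geq 3$ and $d = 2$ behave quite differently: for $d \geq 3$, the factor $x_j^{d-2}$ carries a vanishing coordinate through the partial equation immediately, whereas for $d = 2$ one must chase the zero around the cycle in steps of two, and this is precisely where the hypothesis $p \nmid n+1$ (equivalently, oddness of $n+1$ in characteristic $2$) enters a second time. After that, the clean identity $F = (n+1) T_0$ wraps things up.
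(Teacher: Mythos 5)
Your proof is correct and uses essentially the same ingredients as the paper: the cyclic Jacobian relations, the trick of multiplying the $j$-th relation by $x_j$ to see that all monomials $T_i = c_i x_i^{d-1}x_{i+1}$ agree (so $F(P) = (n+1)T_0$), and the zero-propagation argument split into the cases $d \geqslant 3$ and $d = 2$. The only difference is the order of the two steps — you first rule out vanishing coordinates and then invoke $F(P)=0$, while the paper first uses $F(P)=0$ to produce a vanishing coordinate and then propagates zeros to a contradiction — which is an inessential rearrangement.
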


\begin{proof}
Assume the contrary: the hypersurface cut out by $F$ in $\mathbb P^n$ is singular at 
some point $P=[u_0:u_1:...:u_n] \in \mathbb{P}^n$. By symmetry we may assume without loss 
of generality that $u_1\ne 0$. Using the Jacobian criterion, and remembering that $p\mid d$, we obtain:
\begin{equation}\label{eq:smoothness-klein-partial}
\frac{\partial{F}}{\partial x_i}(P) = c_{i-1} u_{i-1}^{d-1} - c_i u_i^{d-2} u_{i+1}  = 0 
\end{equation}
for each $0\leqslant i\leqslant n$, where the subscripts are taken modulo $n+1$. Multiplying both sides of~\eqref{eq:smoothness-klein-partial} by $u_i$, we obtain
\begin{equation}\label{eq:smoothness-klein}
c_{i-1} u_{i-1}^{d-1} u_i = c_i u_i^{d-1} u_{i+1} .
\end{equation}
Now recall that
\[ F(P) = c_0 u_0^{d-1} u_1 + c_1 u_1^{d-1} u_2 + ... + c_n u_n^{d-1} u_0 = 0 . \]
By~\eqref{eq:smoothness-klein}, the $n$ terms in this sum are all equal to each other. Hence,
$$
0 = F(P) = \sum_{i=0}^{n} c_i u_i^{d-1} u_{i+1} = (n+1) c_0 u_0^{d-1} u_{1}. 
$$
Since $p\nmid (n+1)$, $c_0\neq 0$, and $u_1 \neq 0$, we conclude that $u_0=0$. 

\smallskip
We will divide the remainder of the proof into two cases, according to whether $d=2$ or $d\geqslant 3$. If $d \geqslant 3$, 
then~\eqref{eq:smoothness-klein-partial} tells us that $u_i = 0$ implies $u_{i-1} = 0$ for any $i \in \mathbb Z/(n+1) \mathbb Z$. (Recall that the subscripts in~\eqref{eq:smoothness-klein-partial} 
are viewed modulo $n + 1$.) Using this implication recursively, starting from $u_0 = 0$, we see that $u_0 = u_n = u_{n-1} = \ldots = u_1 = 0$, a contradiction.

\smallskip
Now assume $d=2$. In this case~\eqref{eq:smoothness-klein-partial} tells us that $u_{i - 1} = 0$ implies $u_{i+1} = 0$ for any $i \in \mathbb Z/(n+1) \mathbb Z$. Since we know that $u_0 = 0$, this tells us that $u_{i}=0$ for every even $i$. Since $d=2$, 
the assumption that $p$ divides $d$ tells us that $p = 2$ and the assumption that $p$ does not divide 
$n + 1$ tells us that that $n = 2k$ is even. Thus, $2k + 2 \equiv 1$ modulo $n + 1$ and hence, $0 = u_{2k+2}=u_1=0$, a contradiction.
\end{proof} 

We are now ready to prove Theorem~\ref{thm:main}\eqref{main-thm-item-2} in the case, where $K=\mathbb{F}_q$ is a finite field. 
Since any $K$-linear subspace of a $K$-smooth linear system is again $K$-smooth, we may assume without loss of generality that $r = n$. Note also that $p \nmid \gcd(d, n + 1)$ if and only if $p \nmid d$ or $p \nmid n + 1$.
Thus we may consider two cases.

\smallskip
{\bf Case 1:} $p\nmid d$. We will explicitly construct a linear system $\mathcal{L}$ of dimension $r=n$ with the desired property. By the normal basis theorem, we can find an element $\alpha\in \mathbb{F}_{q^{n+1}}$ such that $\alpha, \alpha^q, \alpha^{q^2}, ..., \alpha^{q^n}$ form an $\mathbb{F}_q$-basis for the $(n+1)$-dimensional vector space $\mathbb{F}_{q^{n+1}}$. Let 
\begin{align*}
    F_0 &= (\alpha x_0 + \alpha^q x_1 + \alpha^{q^2} x_2 + ... + \alpha^{q^i} x_i + ... + \alpha^{q^n}x_n)^{d} , \\ 
    F_1 &= (\alpha^q x_0 + \alpha^{q^2} x_1 + \alpha^{q^3} x_2 + ... + \alpha^{q^{i+1}} x_i + ... + \alpha x_n)^{d}, \\
    F_2 &= (\alpha^{q^2} x_0 + \alpha^{q^3} x_1 + \alpha^{q^4} x_2 + ... + \alpha^{q^{i+2}} x_i + ... + \alpha^q x_n)^{d}, \\
    &\vdots \\
    F_n &= (\alpha^{q^n} x_0 + \alpha^{q} x_1 + \alpha^{q^2} x_2 + ... + \alpha^{q^{i+n}} x_i + ... + \alpha^{q^{n-1}} x_n)^{d}. 
\end{align*}
Note that the polynomials $F_i$ are not defined over $\mathbb{F}_q$. However, the set $\{F_0, F_1, ..., F_n\}$ is invariant under the action of the $q$-th power Frobenius map. Thus, the linear system $\mathcal{L}=\langle F_0, ..., F_n\rangle$ is defined over $\mathbb{F}_q$, that is, one can find a set of new generators $G_0, G_1, \ldots, G_n$ for $\mathcal{L}$ where \emph{each} $G_i$ is defined over $\mathbb{F}_q$. 

We claim that $F_0, F_1, ..., F_n$ are linearly independent over $\overline{\mathbb{F}_q}$. To prove this claim, let
\begin{equation}\label{eq:new-coord}
    y_j = \alpha^{q^j} x_0 + \alpha^{q^{j+1}} x_1 + \alpha^{q^{j+2}} x_2 + ... + \alpha^{q^{j+i}} x_i + ... + \alpha^{q^{j+n}}x_n
\end{equation}
for each $0\leqslant j\leqslant n$, and observe that $F_i=y_i^d$. The linear map $x_i\mapsto y_i$ is a linear automorphism of $\mathbb{P}^n$. Indeed, the matrix of this linear transformation, known as a \emph{Moore matrix}, is non-singular;
see, e.g., \cite{Go96}*{Corollary 1.3.4}.
Thus, $y_0, \ldots, y_n$ are algebraically independent over $\mathbb{F}_q$ 
and hence, over $\overline{\mathbb{F}_q}$. Consequently, $F_0, F_1, \ldots, F_n$ are linearly independent over $\overline{\mathbb F_q}$. This proves the claim. In summary,
$\mathcal{L}=\langle F_0, F_1, \ldots, F_n\rangle$ is a linear system of degree $d$ hypersurfaces in $\mathbb{P}^n$ defined over $\mathbb F_q$ of (projective) dimension $r=n$. 

It remains to show that $\mathcal{L}$ is $\mathbb F_q$-smooth.
Indeed, suppose 
\begin{equation}\label{eq:checking-singularity}
X = \{ c_0 F_0 + c_1 F_1 + ... + c_n F_n = 0 \}
\end{equation}
is a singular hypersurface $X$ which belongs to $\mathcal{L}$,
for some $c_i\in\overline{\mathbb{F}_q}$ where not all $c_i$ are zero. 
Our goal is to show that $X$ is not defined over $\mathbb F_q$.
In the new coordinates $y_i$, we can express \eqref{eq:checking-singularity} as:
\begin{equation*}
X=\{c_0 y_0^d + c_1 y_1^d + ... + c_n y_n^d = 0\}.
\end{equation*}
Since $X$ is singular, we can apply Lemma~\ref{lemma:smoothness-fermat} to deduce that $c_i=0$ for some $i$. Without loss of generality, we may assume that $c_0=0$. By applying the Frobenius map, we see that $X$ is sent to:
\begin{equation*}
X^{\sigma} = \{c_1^q F_2 + ... + c_n^q F_0 = 0 \}.
\end{equation*}
We claim that $X$ and $X^{\sigma}$ are distinct. Indeed, their defining equations are not multiples of one another: otherwise, there would exist a nonzero constant $b \in \overline{\mathbb F_q}$ such that $c_i^{q}= b\cdot c_{i+1}$ for each $0\leqslant i \leqslant n$ taken modulo $n+1$. As $c_0=0$, this would force $c_i=0$ for each $0\leqslant i\leqslant n$, which is a contradiction. Thus, $X$ is not defined over $\mathbb{F}_q$, as desired. We conclude that the linear system $\mathcal{L}$ is $\mathbb F_q$-smooth.

\smallskip
{\bf Case 2:} $p\mid d$ but $p\nmid (n+1)$. Define $y_0, \ldots, y_n$ by the formula 
$\eqref{eq:new-coord}$, and set $F_i=y_i^{q} y_{i+1}$ for $0\leqslant i\leqslant n-1$ and $F_n=y_n^{q} y_{0}$. 
Arguing as in Case 1, one readily checks that $\mathcal{L}=\langle F_0, F_1, ..., F_n\rangle$ 
is a linear subspace of (projective) dimension $n$ defined over $\mathbb F_q$.
Moreover, the same argument as in Case 1, with Lemma~\ref{lemma:smoothness-klein} used 
in place of Lemma~\ref{lemma:smoothness-fermat}, shows that $\mathcal{L}$ is $\mathbb{F}_q$-smooth. 

\smallskip
This completes the proof of
Theorem~\ref{thm:main}\eqref{main-thm-item-2} in the case, where $K=\mathbb{F}_q$ is a finite field. \qed

\section{Conclusion of the proof of Theorem~\ref{thm:main}\eqref{main-thm-item-2}}
\label{sect:induction}

Given a finitely generated field $K$, we define its dimension $\dim(K)$ to be the Krull  dimension of any finitely generated $\mathbb{Z}$-algebra whose fraction field is $K$. In other words, $\dim(K)=\operatorname{tr deg}_{\mathbb{F}_p}(K)$ if $\operatorname{char}(K)=p>0$ and $\dim(K)=1+\operatorname{tr deg}_{\mathbb{Q}}(K)$ if $\operatorname{char}(K)=0$. 
In this section we will prove Theorem~\ref{thm:main}\eqref{main-thm-item-2} over an arbitrary finitely generated field $K$
by induction on $\dim(K)$. 
The inductive step will be based on the following lemma.

\begin{lemma}\label{lem:lifting}
Let $R$ be discrete valuation ring with fraction field $K$ and residue field $L$, and let $F_0, \ldots, F_r\in L[x_0, ..., x_n]$ be linearly independent homogeneous polynomials of degree $d$. Denote their liftings to $R$ by $\overline{F_0}, \ldots, \overline{F_r}\in R[x_0, ..., x_n]\subset K[x_0, ..., x_n]$, respectively. If the linear system $\langle F_0, \ldots, F_r \rangle$ is $L$-smooth, then the linear system $\langle \overline{F_0}, \ldots, \overline{F_r} \rangle$ is $K$-smooth.
\end{lemma}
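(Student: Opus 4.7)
The plan is to proceed by contradiction via a specialization argument: I assume $\langle \overline{F_0}, \ldots, \overline{F_r}\rangle$ fails to be $K$-smooth and produce a singular member of $\langle F_0, \ldots, F_r\rangle$ over $L$, contradicting the hypothesis.

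First, pick $[a_0:\cdots:a_r] \in \mathbb{P}^r(K)$ such that the hypersurface $H_K = V\!\left(\sum_i a_i \overline{F_i}\right) \subset \mathbb{P}^n_K$ is singular at some $\overline{K}$-point. Since $R$ is a DVR (hence a PID) with uniformizer $\pi$, I may rescale the $a_i$ by a power of $\pi$ so that each $a_i \in R$ and $\min_i v(a_i) = 0$; then the reductions $\bar{a}_i \in L$ are not all zero, giving a well-defined point $[\bar{a}_0:\cdots:\bar{a}_r] \in \mathbb{P}^r(L)$. Because $F_0, \ldots, F_r$ are linearly independent over $L$, the polynomial $\sum_i \bar{a}_i F_i$ is nonzero and defines a hypersurface $H_L \subset \mathbb{P}^n_L$.

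Next, set $f = \sum_i a_i \overline{F_i} \in R[x_0, \ldots, x_n]$ and let $Z \subset \mathbb{P}^n_R$ be the closed subscheme cut out by $f$ together with its partial derivatives $\partial f/\partial x_0, \ldots, \partial f/\partial x_n$. By construction, the generic fiber $Z_K$ is the singular locus of $H_K$, which is nonempty by assumption, while the special fiber $Z_L$ is the singular locus of $H_L$ (since reduction commutes with taking partial derivatives). The key step is to invoke properness: since $Z$ is closed in $\mathbb{P}^n_R$ and $\mathbb{P}^n_R \to \operatorname{Spec}(R)$ is proper, the image of $Z \to \operatorname{Spec}(R)$ is a closed subset of the two-point space $\operatorname{Spec}(R)$. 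This image contains the generic point (because $Z_K \neq \emptyset$), and the only closed subset containing the generic point is all of $\operatorname{Spec}(R)$. Hence the closed point is in the image, so $Z_L \neq \emptyset$, which means $H_L$ has a singular $\overline{L}$-point, contradicting the $L$-smoothness of $\langle F_0, \ldots, F_r \rangle$.

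This is essentially a textbook specialization principle, and I do not anticipate any serious obstacle. The only pieces needing minor care are the PID-based rescaling to guarantee at least one $\bar{a}_i \neq 0$, and the identification of $Z_L$ as the true singular locus of $H_L$ (rather than merely a subscheme thereof); both follow from direct inspection.
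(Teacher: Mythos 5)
Your proof is correct and is essentially the paper's argument run in the contrapositive: both rescale the coefficients into $R$ so that at least one is a unit, and then specialize over $\operatorname{Spec}(R)$, using that the non-smooth locus is closed and that the family in $\mathbb{P}^n_R$ is proper over $R$, so a singular point of the generic fiber forces one in the (smooth) special fiber. If anything, your version spells out more explicitly (via the Jacobian-defined scheme $Z$ and the closed image under the proper map) the properness/specialization step that the paper's proof leaves implicit.
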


\begin{proof}
Let $(a_0, \ldots, a_r)$ be in $K^{r+1}\setminus \{(0, \ldots, 0)\}$. We will show that the hypersurface in $\mathbb{P}^{n}_{K}$ defined by the form $a_0\overline{F_0}+\ldots + a_r \overline{F_r}$ is smooth. By scaling the $a_i$, we may assume that $a_i\in R$ for all $i$ and $a_i$ is invertible in $R$ for at least one $i$. Consider the hypersurface $X\subset \mathbb{P}^n_{K}$ defined by $a_0 \overline{F_0}+\cdots +a_r \overline{F_r}=0$. Then $X$ is flat over $\operatorname{Spec}(R)$ and its fiber over $\mathcal{L}$ is smooth by hypothesis. Since the smooth locus of the projection $X\to\operatorname{Spec}(R)$ is open in $X$, its complement must be empty. It follows that the fiber over the generic point of $\operatorname{Spec}(R)$ is smooth, as desired.
\end{proof}

We are now ready to finish the proof of Theorem~\ref{thm:main}\eqref{main-thm-item-2} by induction on the dimension of 
the finitely generated field $K$. If $\operatorname{dim}(K)=0$, then $K$ is a finite field. In this case Theorem~\ref{thm:main}\eqref{main-thm-item-2} is proved in Section~\ref{sect:finite}. If $\operatorname{dim}(K)>0$, then it is easy to see that $K$ admits a discrete valuation with finitely generated residue field $L$ such that $\dim(L)=\dim(K)-1$. Furthermore, if $\operatorname{char}(K)=0$, then this valuation can be chosen so that $\operatorname{char}(L)$ is positive and arbitrarily large. By applying Lemma~\ref{lem:lifting}, we can lift an $L$-smooth linear system of hypersurfaces 
in $\mathbb P^n$ to a $K$-smooth linear system of hypersurfaces in $\mathbb P^n$ of the same degree degree and the same dimension.
\qed

\section{Quadrics in characteristic $2$}\label{sect:quadrics}

In this section, we will show that the hypothesis $p\nmid \gcd(d, n+1)$ in our main theorem cannot be removed in general. We will focus on the case, where $p=d=2$ and $n$ is odd. Our goal is to prove the following result.

\begin{theorem}\label{thm:quadrics}
Suppose $n$ is an odd positive integer, and $K$ be a field of characteristic $2$ (not necessarily finitely generated). Then
for any $d \geqslant 2$ there does \textbf{not} exist a linear system $\mathcal{L} = \langle F_0, \ldots, F_n \rangle \subset K[x_0, \ldots, x_n]_2$ 
of (projective) dimension $n$ over $K$ such that each $K$-member of $\mathcal{L}$ is a smooth quadric hypersurface in $\mathbb{P}^n$.
\end{theorem}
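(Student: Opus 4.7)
My plan is to reduce the statement to a short linear algebra fact about alternating bilinear forms. For a quadric $F \in K[x_0, \ldots, x_n]_2$, let $B_F(v, w) := F(v+w) - F(v) - F(w)$ be its polar form. Since $\operatorname{char}(K) = 2$, we have $B_F(v, v) = 2 F(v) = 0$, so $B_F$ is an alternating bilinear form on $K^{n+1}$, and the assignment $F \mapsto B_F$ is $K$-linear. A direct computation shows that the gradient of $F$ at a point $P$ equals $B_F \cdot P$ in matrix form (the diagonal monomials $a_{ii} x_i^2$ contribute nothing to the partial derivatives in characteristic $2$), so the singular locus of $\{F = 0\}$ over $\overline{K}$ equals $\ker B_F \cap \{F = 0\}$. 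I first want to prove that, when $n$ is odd, $\{F = 0\}$ is smooth if and only if $B_F$ is non-degenerate. The nontrivial direction uses the parity hypothesis: since $n+1$ is even and alternating forms always have even rank, a degenerate $B_F$ must satisfy $\dim \ker B_F \geqslant 2$; the restriction of $F$ to this subspace is either identically zero or a nonzero quadratic form in at least two variables, so in either case it has a nontrivial zero over $\overline{K}$, producing a singular $\overline{K}$-point of $\{F=0\}$.

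Once this smoothness criterion is in place, the proof is a one-step dimension count. I fix any nonzero vector $v_0 \in K^{n+1}$ and consider the $K$-linear map
\[
  \Phi \colon \mathcal{L} \longrightarrow (K^{n+1})^{*}, \qquad F \longmapsto B_F(v_0, \,\cdot\,).
\]
Because $B_F(v_0, v_0) = 0$ for every $F$, the image of $\Phi$ is contained in the annihilator hyperplane $W := \{\phi \in (K^{n+1})^* : \phi(v_0) = 0\}$, which has $K$-dimension $n$. Since $\mathcal{L}$ has $K$-dimension $n+1 > n$, the kernel of $\Phi$ is nontrivial. Any nonzero $F \in \ker \Phi$ satisfies $v_0 \in \ker B_F$, so $B_F$ is degenerate, and by the smoothness criterion the quadric $\{F = 0\}$ is singular. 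This produces a singular $K$-member of $\mathcal{L}$, contradicting $K$-smoothness.

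The main obstacle is the smoothness criterion in the first paragraph — one must both identify the Jacobian ideal with the radical of $B_F$ and exploit the hypothesis "$n$ odd" to upgrade $\dim \ker B_F \geqslant 1$ (automatic from degeneracy) to $\dim \ker B_F \geqslant 2$, which is what guarantees that a singular candidate actually lies on $\{F = 0\}$. By contrast, in the analogous setup for $n$ even one only gets $\dim \ker B_F \geqslant 1$ and the argument breaks down, matching the fact that smooth linear systems of quadrics do exist in that case. Once the criterion is available, the conclusion is just the observation that a $K$-linear map from an $(n+1)$-dimensional source to an $n$-dimensional target has a nonzero kernel, applied to the evaluation-at-$v_0$ map on polar forms.
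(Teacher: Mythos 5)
Your proof is correct, and it takes a genuinely different (though closely related) route from the paper's. The paper fixes the point $[1:0:\cdots:0]$ and uses the truncation map $\Psi\colon K[x_0,\ldots,x_n]_2 \to x_0K[x_0,\ldots,x_n]_1$: $K$-smoothness forces $\Psi$ to be injective on $\mathcal{L}(K)$, equality of dimensions makes it surjective, and the preimage of $x_0^2$ is a $K$-member of the normal form $x_0^2+G(x_1,\ldots,x_n)$; a separate lemma then shows such a quadric is singular, because the $n\times n$ Hessian of $G$ is skew-symmetric of odd size and hence has determinant zero, after which a square root of $-G(t_1,\ldots,t_n)$ in $\overline{K}$ supplies the coordinate $t_0$ needed to land on the quadric. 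You instead attach to each member its polar form $B_F$, note it is alternating in characteristic $2$, and run the dimension count directly on $F\mapsto B_F(v_0,\cdot)$, whose image lies in the $n$-dimensional annihilator of $v_0$; a nonzero kernel element has degenerate $B_F$, and since alternating forms have even rank while $n+1$ is even, the radical has dimension at least $2$, so $F$ has a nontrivial zero on it over $\overline{K}$, which is a singular point. Your criterion (for $n$ odd, the quadric is smooth if and only if $B_F$ is nondegenerate) subsumes the paper's Lemma on $x_0^2+G$: in those coordinates $e_0$ together with the Hessian null vector spans a $2$-dimensional piece of the radical, and your ``zero of a quadratic form in at least two variables'' step is exactly the paper's square-root choice of $t_0$. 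What your packaging buys is a coordinate-free argument that skips the surjectivity/normal-form step and makes transparent where the parity of $n$ enters; the paper's version is more explicit, exhibiting the offending member and its singular point concretely, but the underlying linear-algebra input (vanishing determinant of an odd-size skew-symmetric matrix versus even rank of an alternating form) is essentially the same fact viewed on the $n\times n$ Hessian of $G$ rather than on the $(n+1)\times(n+1)$ polar form of $F$.
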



\medskip 

We begin with the following lemma.

\begin{lemma}\label{lemma:sing-quadrics}
Let $K$ be a field of characteristic $2$ and $n \geqslant 1$ be an odd integer. 
Consider a quadric hypersurface $X\subset \mathbb{P}^n$ cut out by
$$
F(x_0, \ldots, x_n) = x_0^2 + G(x_1, x_2, ..., x_n)
$$
where $G \in K[x_1, \ldots, x_n]$ is a homogeneous polynomial of degree $2$. Then $X$ is singular.
\end{lemma}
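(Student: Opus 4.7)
The plan is to apply the Jacobian criterion and exploit the peculiarities of quadratic forms in characteristic $2$.

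First I would observe that in characteristic $2$ the partial derivative $\partial F/\partial x_0 = 2x_0$ vanishes identically, so a point $P = [u_0 : u_1 : \cdots : u_n] \in \mathbb{P}^n(\overline{K})$ lies on the singular locus of $X$ provided that $F(P)=0$ and $\partial G/\partial x_i(u_1, \ldots, u_n) = 0$ for every $i \in \{1, \ldots, n\}$. The strategy is therefore to produce a nonzero $(u_1, \ldots, u_n) \in K^n$ killing every $\partial G/\partial x_i$, and then choose $u_0$ to force $F(P) = 0$.

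Next I would analyze the linear system $\partial G/\partial x_i = 0$. Writing $G = \sum_{i \leqslant j} b_{ij} x_i x_j$, a direct computation shows that in characteristic $2$ the matrix $M$ with $M_{ki}$ equal to the coefficient of $x_i$ in $\partial G/\partial x_k$ is \emph{alternating} in the strong sense: $M_{ik} = M_{ki}$ and $M_{ii} = 0$ (the diagonal contribution $2b_{kk} x_k$ disappears). Equivalently, the bilinear form $\omega(v, w) = v^T M w$ on $K^n$ satisfies $\omega(v, v) = 0$ for every $v$.

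Now I would invoke the standard fact that an alternating bilinear form on an odd-dimensional vector space over any field is necessarily degenerate — this is because a symplectic vector space must have even dimension, so the radical of $\omega$ cannot be trivial when $\dim = n$ is odd. Consequently there exists a nonzero vector $(u_1, \ldots, u_n) \in K^n$ with $M \cdot (u_1, \ldots, u_n)^T = 0$, i.e., on which every $\partial G/\partial x_i$ vanishes.

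Finally I would let $u_0 \in \overline{K}$ be a square root of $G(u_1, \ldots, u_n)$, which exists since $\overline{K}$ is perfect of characteristic $2$. Then $F(u_0, u_1, \ldots, u_n) = u_0^2 + G(u_1, \ldots, u_n) = 2\, G(u_1, \ldots, u_n) = 0$, and all partials of $F$ vanish at this point, so $P = [u_0 : u_1 : \cdots : u_n]$ is a singular $\overline{K}$-point of $X$. The point is well-defined in $\mathbb{P}^n(\overline{K})$ because $(u_1, \ldots, u_n)$ is already nonzero. The main conceptual step — and the only one requiring a little care — is the assertion that the $n \times n$ alternating matrix $M$ is singular when $n$ is odd; everything else is a direct application of the Jacobian criterion.
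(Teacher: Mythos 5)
Your proof is correct and follows essentially the same route as the paper: apply the Jacobian criterion, observe that the coefficient matrix of the linear system $\partial G/\partial x_1 = \cdots = \partial G/\partial x_n = 0$ is alternating and hence singular when $n$ is odd, and then pick $u_0 \in \overline{K}$ with $u_0^2 = G(u_1, \ldots, u_n)$. The only difference is cosmetic, namely how the key linear-algebra fact is justified --- you invoke the even dimensionality of nondegenerate alternating forms, while the paper specializes the universal skew-symmetric matrix over $\mathbb{Z}$ --- and your explicit observation that the diagonal of $M$ vanishes is a good touch, since in characteristic $2$ skew-symmetry alone (without the zero diagonal) would not force the determinant to vanish.
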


\begin{proof} 
The Jacobian criterion gives rise to a homogeneous system
\[ \frac{\partial G}{\partial x_1} = \ldots = \frac{\partial G}{\partial x_n} =  0 \]
of $n$ linear equations in $x_1, \ldots, x_n$. (Note $x_0$ never appears in this system.)
We claim that this homogeneous linear system has a nontrivial solution. 
To prove the claim, it suffices to show that the matrix $M$ of this linear system is singular. Note that $M$ is the Hessian matrix of $G$ and hence, is symmetric. (Since $G$ is a quadratic polynomial, the entries of the Hessian matrix are constant.) 
Because we are in characteristic $2$, $M$ is also skew-symmetric. It remains to show
that a skew-symmetric square $n \times n$ matrix $M$ over any commutative ring has zero determinant, when $n$ is odd.

Indeed, consider the universal skew-symmetric matrix $n \times n$ matrix $A$ over the polynomial 
ring $R = \mathbb{Z}[x_{ij} | 1 \leqslant i < j \leqslant n]$. By definition, the
$(i, j)$-th entry of $A$ is $x_{ij}$ if $i < j$, $0$ if $i = j$ and $- x_{ij}$ if $i > j$. 
Taking the determinant on both sides of $A^T = - A$, and remembering that $n$ is odd, we obtain $\det(A) = -
\det(A)$ in $R$. Since $R$ is an integral domain of characteristic $0$, this implies that $\det(A) = 0$. 
A simple specialization argument (specializing $x_{ij}$ to the $(i, j)$-th entry of $M$) now shows that $\det(M) = 0$, as desired.

Thus, we have found $(0, \ldots, 0) \neq (t_1, \ldots, t_n) \in K^n$ 
such that for any point $P \in \mathbb{P}^n$ of the form $P=[t_0:\ldots:t_n]$, we have
\begin{equation} \label{e.jacobian}
\frac{\partial F}{\partial x_0}(P)= \ldots = \frac{\partial{F}}{\partial x_n}(P) = 0.
\end{equation}
Note that since $\deg(F)$ is even and we are in characteristic $2$, conditions~\eqref{e.jacobian}
do not guarantee that $F(P) = 0$. On the other hand, the partial derivatives of $F(x_0, \ldots, x_n)$ 
depend only on $x_1, \ldots, x_n$ and not on $x_0$.
We thus want to choose $t_0$ so that the resulting point $P = [t_0: \ldots : t_n]$ lies 
on the hypersurface $X$ cut out by $F$. To achieve this goal, we choose $t_0 \in \overline{K}$ so that
$$
t_0^2 = - G(t_1, t_2, ..., t_n).
$$
Then $P = [t_0:\ldots:t_n]\in\mathbb{P}^n(\overline{K})$ satisfies both \eqref{e.jacobian} and $F(P)=0$.
In other words, $X$ is singular at $P$.
\end{proof}

\begin{remark} If $K$ is a perfect field of characteristic $2$, 
then the above construction gives rise to
a singular point $P = [t_0: \ldots: t_n]$ of $X$ defined over $K$. 
Indeed, since $K$ is closed under taking square roots, we can always choose $t_0 \in K$ in the last step.
\end{remark}

\begin{remark} The conclusion of Lemma~\ref{lemma:sing-quadrics} is false when $n=2k$ is even. Indeed, the quadric hypersurface in $\mathbb{P}^n$ defined by the polynomial
$$
x_0^2 + x_1x_2+x_3x_4 + ... + x_{2k-1}x_{2k}=0
$$
is smooth.
\end{remark}

We now proceed with a proof of Theorem~\ref{thm:quadrics}.

\begin{proof}[Proof of Theorem~\ref{thm:quadrics}] Suppose, to the contrary, that $\mathcal{L}=\langle F_0, \ldots, F_n\rangle$ is a $K$-smooth linear system of quadric hypersurfaces of (projective) dimension $n$. Let $\mathcal{L}(K)$ denote the set of $K$-members of the system.

Consider the $K$-linear map
$$
\Psi: K[x_0, \ldots, x_n]_2 \to x_0 K[x_0, \ldots, x_n]_1
$$
introduced in Section~\ref{sect:main-result} (with $d = 2$). Recall that
$x_0 K[x_0, \ldots, x_n]_1$ denotes the $(n+1)$-dimensional $K$-vector space of quadratic forms in $x_0, \ldots, x_n$
divisible by $x_0$ and that $\Psi$ removes from $F \in K[x_0, \ldots, x_n]$ all monomials which are not multiples of $x_0$. 
When $d = 2$, the map $\Psi$ is given by the simple formula 
\[ (\Psi F)(x_0, \ldots, x_n) = F(x_0, x_1, \ldots, x_n) - F(0, x_1, \ldots, x_n). \]
As we noted in Section~\ref{sect:main-result}, $F$ lies in the kernel of $\Psi$ if and only if the hypersurface in $\mathbb{P}^n$ cut out by $F$
is singular at 
the point $[1:0: \ldots: 0]$. Since the linear system $\mathcal{L}$ is $K$-smooth, this tells us that the restricted map
$$
\Psi: \mathcal{L}(K) \to x_0 K[x_0, \ldots, x_n]_1
$$
is injective. Since the vector spaces $\mathcal{L}(K)$ and $x_0 K[x_0, \ldots, x_n]_1$ are of the same dimension $n + 1$, we conclude that
 $\Psi$ must also be surjective. In particular, there exists some $F\in\mathcal{L}(K)$ whose image under $\Psi$ is $x_0^2$. In other words, 
 \[ F (x_0, \ldots, x_n)  = x_0^2 +G(x_1, ..., x_n) \]
 for some quadratic form $G$ in $x_1, \ldots, x_n$. By Lemma~\ref{lemma:sing-quadrics}, $F$ cuts out a singular quadric hypersurface. This contradicts the assumption that each $K$-member of $\mathcal{L}$ is smooth. We conclude that a $K$-smooth linear system $\mathcal{L}$ of quadric hypersurfaces in $\mathbb P^n$ of dimension $n$ does not exist. 
\end{proof}

We have shown that the hypothesis $p\nmid \gcd(d, n+1)$ of Theorem~\ref{thm:main}\eqref{main-thm-item-2} cannot be removed in the case $p=2$. We do not know whether 
this assumption can be dropped for other primes $p$. We finish the paper with an example, 
which shows that it can be for one particular choice of $K$, $p$, $d$, and $n$.

\begin{example} Set $d=3$ and $n=2$ and consider the following cubic homogeneous polynomials with coefficients in $K = \mathbb{F}_3$:
\begin{align*}
    F_0 &= x^{3}+x^{2}y-x y^{2}+y^{3}+x^{2} z+x y z+y^{2} z-x z^{2}+z^{3} \\ 
    F_1 &= x^{3}+x^{2} y-x^{2}z-x y z+y^{2} z+z^{3} \\ 
    F_2 &= x^{3}-x^{2}y+x y^{2}+y^{3}+x^{2}z+x y z+y^{2} z-y z^{2}
\end{align*}
A computer calculation shows that $aF_0+bF_1+cF_2=0$ defines a smooth plane curve for each of the possible $3^2+3+1=13$ choices $[a:b:c]\in\mathbb{P}^2(\mathbb{F}_3)$. In other words, $\langle F_0, F_1, F_2 \rangle$ is a $\mathbb{F}_3$-smooth linear system of (projective) dimension $n=2$. Thus, the conclusion of Theorem~\ref{thm:main}\eqref{main-thm-item-2} holds in this example, 
even though $p$ divides $\gcd(d,n+1)$.
\end{example}

\begin{bibdiv}
\begin{biblist}

\bib{AG22}{article}{
    AUTHOR = {Asgarli, Shamil},
    AUTHOR={Ghioca, Dragos},
    TITLE = {Smoothness in pencils of hypersurfaces over finite fields},
    JOURNAL={Bulletin of the Australian Mathematical Society},
    Publisher={Cambridge University Press},
    YEAR={2022}, 
    PAGES={1–10},
    DOI={10.1017/S0004972722000776},
    URL={https://doi.org/10.1017/S0004972722000776},
}

\bib{Bal07}{article}{
    AUTHOR = {Ballico, E.},
     TITLE = {Bertini's theorem over a finite field for linear systems of
              quadrics},
   JOURNAL = {Int. J. Pure Appl. Math.},
    VOLUME = {35},
      YEAR = {2007},
    NUMBER = {4},
     PAGES = {453--455},
}

\bib{Bal09}{article}{
    AUTHOR = {Ballico, E.},
     TITLE = {Vanishings and non-vanishings of homogeneous forms over a
              finite field},
   JOURNAL = {Int. J. Pure Appl. Math.},
    VOLUME = {57},
      YEAR = {2009},
    NUMBER = {2},
     PAGES = {219--224},
}

\bib{DGH21}{article}{
    AUTHOR = {Dimitrov, Vesselin},
    AUTHOR = {Gao, Ziyang}, 
    AUTHOR = {Habegger, Philipp},
     TITLE = {Uniform bound for the number of rational points on a pencil of
              curves},
   JOURNAL = {Int. Math. Res. Not. IMRN},
      YEAR = {2021},
    NUMBER = {2},
     PAGES = {1138--1159},
      ISSN = {1073-7928},
       DOI = {10.1093/imrn/rnz248},
       URL = {https://doi.org/10.1093/imrn/rnz248}
}

\bib{Go96}{book}{
    AUTHOR = {Goss, David},
     TITLE = {Basic structures of function field arithmetic},
    SERIES = {Ergebnisse der Mathematik und ihrer Grenzgebiete (3) [Results
              in Mathematics and Related Areas (3)]},
    VOLUME = {35},
 PUBLISHER = {Springer-Verlag, Berlin},
      YEAR = {1996},
     PAGES = {xiv+422},
      ISBN = {3-540-61087-1},
       DOI = {10.1007/978-3-642-61480-4},
       URL = {https://doi.org/10.1007/978-3-642-61480-4},
}

\bib{PY08}{article}{
    AUTHOR = {Pereira, Jorge Vitório},
    AUTHOR={Yuzvinsky, Sergey},
     TITLE = {Completely reducible hypersurfaces in a pencil},
   JOURNAL = {Adv. Math.},
    VOLUME = {219},
      YEAR = {2008},
    NUMBER = {2},
     PAGES = {672--688},
       DOI = {10.1016/j.aim.2008.05.014},
       URL = {https://doi.org/10.1016/j.aim.2008.05.014},
}

\bib{Ste89}{article}{
    AUTHOR = {Stein, Yosef},
     TITLE = {The total reducibility order of a polynomial in two variables},
   JOURNAL = {Israel J. Math.},
    VOLUME = {68},
      YEAR = {1989},
    NUMBER = {1},
     PAGES = {109--122},
       DOI = {10.1007/BF02764973},
       URL = {https://doi.org/10.1007/BF02764973},
}

\bib{Vis93}{article}{
    AUTHOR = {Vistoli, Angelo},
     TITLE = {The number of reducible hypersurfaces in a pencil},
   JOURNAL = {Invent. Math.},
    VOLUME = {112},
      YEAR = {1993},
    NUMBER = {2},
     PAGES = {247--262},
       DOI = {10.1007/BF01232434},
       URL = {https://doi.org/10.1007/BF01232434},
}

\end{biblist}
\end{bibdiv}

\end{document}